\setlist[enumerate]{itemsep=5pt}
\newcommand{\menge}[2]{\big\{{#1} \;|\; {#2}\big\}}
\newcommand{\emp}{\ensuremath{{\varnothing}}}
\newcommand{\scal}[2]{\left\langle{#1}\mid {#2} \right\rangle}
\newcommand{\RR}{\ensuremath{\mathbb R}}
\newcommand{\NN}{\ensuremath{\mathbb N}}
\newcommand{\dom}{\ensuremath{\operatorname{dom}}}
\newcommand{\prox}{\ensuremath{\operatorname{prox}}}
\newcommand{\E}{\ensuremath{\mathbf{E}}}
\newcommand{\Id}{\ensuremath{\operatorname{Id}}}
\newtheorem{theorem}{Theorem}[section]
\newtheorem{proposition}[theorem]{Proposition}
\theoremstyle{plain}{\theorembodyfont{\rmfamily}
}
\theoremstyle{plain}{\theorembodyfont{\rmfamily}
}
\theoremstyle{plain}{\theorembodyfont{\rmfamily}
\theoremstyle{plain}{\theorembodyfont{\rmfamily}
\newtheorem{example}[theorem]{Example}}
\theoremstyle{plain}{\theorembodyfont{\rmfamily}
\newtheorem{problem}[theorem]{Problem}}
\theoremstyle{plain}{\theorembodyfont{\rmfamily}
\newtheorem{remark}[theorem]{Remark}}
\theoremstyle{plain}{\theorembodyfont{\rmfamily}
}

\definecolor{labelkey}{rgb}{0,0.08,0.45}
\definecolor{refkey}{rgb}{0,0.6,0.0}
\definecolor{Brown}{rgb}{0.45,0.0,0.05}
\definecolor{dgreen}{rgb}{0.00,0.49,0.00}
\definecolor{dblue}{rgb}{0,0.08,0.75}
\numberwithin{equation}{section}

\begin{document}
\title{\sffamily\huge 
On the linear convergence of the  stochastic gradient method with constant step-size
}
\author{Volkan Cevher and B$\grave{\text{\u{a}}}$ng C\^ong V\~u\\[5mm]
 Laboratory for Information and Inference Systems (LIONS), \\EPFL, Switzerland\\ 
 \{volkan.cevher,bang.vu\}@epfl.ch
}
\date{}
\maketitle
\vspace{-10mm}
\begin{abstract}\vspace{-3mm}
The strong growth condition (SGC) is known to be a sufficient condition for linear convergence of the stochastic gradient method using a constant step-size $\gamma$ (SGM-CS). 
In this paper, we provide  a necessary condition, for the linear convergence of SGM-CS, that is weaker than SGC. 
Moreover, when this necessary is violated up to a additive perturbation $\sigma$, we show that  both the projected stochastic gradient method using a constant step-size (PSGM-CS) and  the proximal stochastic gradient method exhibit linear convergence to a noise dominated region, whose distance to the optimal solution is proportional to $\gamma \sigma$.
\end{abstract}

{\bf Keywords:} 
Stochastic gradient, linear convergence, strong growth condition.

 {\bf Mathematics Subject Classifications (2010)}: 47H05, 49M29, 49M27, 90C25


\vspace{-5mm}
\section{Introduction}\vspace{-5mm}
In this paper, we consider the following stochastic convex optimization problem, which is widely studied in the literature;
 \textit{cf.}, \cite{Duchi09,LSB14a,Com02} for instances.
\begin{problem} \label{prob1}
Let $f\colon\RR^d\to\RR$ be a convex differentiable function with $L$-Lipschitz continuous gradient with an expectation form $f(x)= \E_\xi [K(x,\xi)]$. In the expectation,  $\xi$ is a random vector whose probability distribution $P$ is supported on set a $\Omega \subset \RR^m$, and $K\colon \RR^d\times\Omega\to \RR$ is convex function with respect to the variable $x$.
Let $g\colon\RR^d\to \left]-\infty,+\infty\right]$ be a proper lower semicontinuous convex function. Based on this setup, the problem we are interested in studying can be written as 
\begin{equation}
\underset{x\in \RR^d}{\text{minimize}} \; f(x) +g(x),
\end{equation}
under the following assumptions: 
\begin{enumerate}
\item It is possible to obtain independent and identically distributed (iid) samples of $(\xi_t)_{t\in\NN}$ of $\xi$.
\item Given $(x_t,\xi_t)\in \RR^d\times\Omega$,
one can find a point $\nabla K(x_t,\xi_t)$ such that $ \E_{\xi_t}[\nabla K(x_t,\xi_t)] = \nabla f(x_t)$. Here, the gradient $\nabla K(x,\xi)$ is taken with respect to $x$.
\end{enumerate}
\end{problem}

The proximal stochastic gradient method  (\textit{cf.}, \cite{Duchi09,LSB14a,Com02}, and the references therein) is an elementary method
for solving Problem \ref{prob1}. This method is extremely simple and highly scalable since 
it only uses  the proximity operator of $g$ and an unbiased estimate of the gradient of $f$ at each iteration. 
Hence, the method is  popular in machine learning and signal processing applications.
 
In this paper, we focus our attention particularly to the case where $g$ is the indicator of some nonempty, closed convex set $C$
(\textit{cf.},  \cite{Nem08, Shamir13} and the references therein). Then, the proximal stochastic gradient method reduces to 
 the projected stochastic gradient method (PSGM):
\begin{equation}
\label{PSG}
 x_0\in C\; \text{and}\; (\forall t\in\NN)\; x_{t+1} = P_C(x_t-\gamma_t \nabla K(x_t,\xi_t)),
\end{equation}
where $\gamma_t> 0$ is the step size. When $C$ is the whole space,  \eqref{PSG} is the 
stochastic gradient method (SGM). 

While the computational cost of these stochastic methods is much cheaper than their deterministic counterparts, their slow convergence rate is problematic for obtaining high accuracy solutions. Indeed, even when $f$ is strongly convex, PSGM only attains a  sub-linear  convergence rate in general. 

To improve the convergence rate of PSGM, we can use variance reduction as proposed in 
\cite{xiao}. When the objective $f$ has a  finite sum form ($f = n^{-1}\sum_{i=1}^n f_i$), this method computes the full gradient periodically.  Hence, its per iteration cost is dimension dependent. For faster convergence, we can also use the stochastic averaged gradient algorithm (SAGA) in \cite{Roux12}, which requires additional memory. Other modifications do exist to circumvent the convergence speed issue. 

Surprisingly, SGM with constant step-size (SGM-CS) directly attains linear convergence when the strong growth condition (SGC) \cite{Mark13} is satisfied. 
When $f$ has the finite sum structure, SGC can be written as follows with $B>0$:
 \begin{equation}
 \max_{1\leq i\leq n}\|\nabla f_i(x) \|^2 \leq  B \|\nabla f(x)\|^2.
 \end{equation}
Such conditions are also used in \cite{solodov98,Tseng98} for the deterministic incremental gradient method and \cite{Gur15} for the incremental Newton method. Note that 
 \cite{Mark13,solodov98,Tseng98,Gur15} use above condition for $C=\RR^d$.
 
 In this work, we  prove that SGC is also a necessary condition  for the linear convergence of SGM-CS with step-size $\gamma$.  When SGC is violated up to a additive perturbation $\sigma$, we show that PSGM-CS exhibits linear convergence to a noise dominated region, whose distance to the optimal solution is proportional to $\gamma \sigma$. To our knowledge, this result is new. We also derive similar results to the proximal stochastic gradient method.

The paper is organized as follows. We first recall some basic notations in convex analysis in \cite{livre1} below. Section 2 then presents our main results  with a necessary and sufficient condition for the 
 linear convergence of SGM with constant step-size. We also extend these results to the PSGM and  the proximal stochastic gradient method. Section 3 studies the necessary condition in the context of the linear convergence of randomized Kaczmarz algorithm. We conclude in Section 4. 
 
 \noindent{\bf Notations.} 
Given a non empty closed convex set $C$,  the projection of $x$ onto $C$ id denoted by $P_Cx$. The indicator of $C$
is denoted by $\iota_C$.
The proximity operator of a proper lower semicontinuous convex function $g$ is denoted by $\prox_g$. We denote $\dom(g)$ the effective domain of $g$.
The subdifferential of $g$ at $p$ is defined by
 $
 \partial g(p) = \menge{u\in\RR^d}{ (\forall x\in \RR^d)\; g(x) -g(p)\geq \scal{x-p}{u}}.
 $
 When $\partial g$ is a singleton, $g$ is a differentiable function and it is denoted by $\nabla g(p)$. The identity operator is denoted by $\Id$. 
 A single-valued operator $B\colon \RR^d\to\RR^d$ is $\beta$-cocoercive, for some $\beta \in \left]0,+\infty\right[$, if 
 $$(\forall x\in\RR^d)(y\in\RR^d)\;\scal{x-y}{Bx-By}\geq \beta\|Bx-By\|^2.$$
 Given an i.i.d sequence $(\xi_t)_{t\in\NN}$, we denote $\E_{\xi_t}[x]$ is the conditional expectation of $x$ with respect to the history 
 $\xi_{[t-1]} = \{\xi_0,\xi_1,\ldots, \xi_{t-1}\}$.
\section{Main results} 
Let us first recall the proximal stochastic gradient algorithm which was proposed for solving Problem \ref{prob1}.  
 Let $x_0\in\RR^d$ and $(\xi_t)_{t\in\NN}$ be an iid sequence, and let $\gamma_{t} >0$. We iterate as follows
\begin{equation}
\label{algo1}
(\forall t\in\NN)\quad x_{t+1} = \prox_{\gamma_{t} g}(x_t-\gamma_{t} \nabla K(x_t,\xi_t)).
\end{equation}
Let us define the stochastic gradient mapping,
$\mathcal{G}(x_t,\xi_t) = \gamma_{t}^{-1}(x_t- x_{t+1}).$
By the definition of the  proximity operator, there exists $q_{t+1} \in \partial g(x_{t+1})$ such that 
\begin{equation}
\mathcal{G}(x_t,\xi_t)  = q_{t+1} + \nabla K(x_t,\xi_t).
\end{equation}
Our main result can be now stated. 
\begin{theorem} Suppose that the solution set $\mathcal{S}$ is non-empty, and conditioned on $\xi_{[t-1]} = \{\xi_0,\xi_1\ldots,\xi_{t-1}\}$: 
\begin{equation}
\label{e:lin}
(\forall t\in\NN)\quad \E_{\xi_t}[\|x_{t+1} -x^*\|^2] \leq \omega \|x_t-x^*\|^2 +\gamma_{t}^2\sigma^2,
\end{equation}
for some constant $\omega \in \left]0,1\right[$,  constant $\sigma\in\RR$ and $x^*\in\mathcal{S}$. Then, the following holds.
\begin{enumerate}
\item\label{t:1i} We have
\begin{equation}
\label{main1}
\E_{\xi_t}[\|\mathcal{G}(x_t,\xi_t) \|^2] \leq \frac{1}{1-\omega} \|\E_{\xi_{t}}[\mathcal{G}(x_t,\xi_t)] \|^2 + \sigma^2.
\end{equation}
\item \label{t:1ii}  If $g\equiv c$ is a constant function, then $q_t\equiv 0$ and
 \begin{equation}
\label{main2}
\E_{\xi_t}[\|\nabla K(x_t,\xi_t) \|^2] \leq \frac{1}{1-\omega} \|\nabla f(x_t) \|^2 +\sigma^2.
\end{equation}
\end{enumerate}
\end{theorem}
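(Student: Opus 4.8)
The plan is to prove part \ref{t:1i} by a direct expansion of the update relation, and then to obtain part \ref{t:1ii} as an immediate specialization. First I would rewrite the iteration as $x_{t+1}=x_t-\gamma_t\mathcal{G}(x_t,\xi_t)$, which is exactly the definition of the stochastic gradient mapping. Subtracting $x^*$ and expanding the squared norm gives
\begin{equation}
\|x_{t+1}-x^*\|^2 = \|x_t-x^*\|^2 - 2\gamma_t\scal{x_t-x^*}{\mathcal{G}(x_t,\xi_t)} + \gamma_t^2\|\mathcal{G}(x_t,\xi_t)\|^2.
\end{equation}
Taking the conditional expectation $\E_{\xi_t}[\cdot]$ and using that $x_t-x^*$ is determined by the history $\xi_{[t-1]}$ (so it factors out of the expectation) produces an exact identity for $\E_{\xi_t}[\|x_{t+1}-x^*\|^2]$ in terms of $\|x_t-x^*\|^2$, the inner product $\scal{x_t-x^*}{\E_{\xi_t}[\mathcal{G}(x_t,\xi_t)]}$, and $\E_{\xi_t}[\|\mathcal{G}(x_t,\xi_t)\|^2]$.

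Next I would insert this identity into the linear-convergence hypothesis \eqref{e:lin} and rearrange to isolate the quantity of interest, obtaining
\begin{equation}
\gamma_t^2\E_{\xi_t}[\|\mathcal{G}(x_t,\xi_t)\|^2] \leq -(1-\omega)\|x_t-x^*\|^2 + 2\gamma_t\scal{x_t-x^*}{\E_{\xi_t}[\mathcal{G}(x_t,\xi_t)]} + \gamma_t^2\sigma^2.
\end{equation}
The crux of the argument is controlling the cross term. I would apply Young's inequality $2\scal{a}{b}\leq\|a\|^2+\|b\|^2$ with the weighted split $a=\sqrt{1-\omega}\,(x_t-x^*)$ and $b=\gamma_t(1-\omega)^{-1/2}\E_{\xi_t}[\mathcal{G}(x_t,\xi_t)]$, so that $2\gamma_t\scal{x_t-x^*}{\E_{\xi_t}[\mathcal{G}(x_t,\xi_t)]}\leq (1-\omega)\|x_t-x^*\|^2 + \gamma_t^2(1-\omega)^{-1}\|\E_{\xi_t}[\mathcal{G}(x_t,\xi_t)]\|^2$. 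Substituting this cancels the negative $-(1-\omega)\|x_t-x^*\|^2$ term exactly, and dividing through by $\gamma_t^2>0$ yields \eqref{main1}. The one delicate point is the choice of weights: any split other than $(1-\omega)^{1/2}$ and $(1-\omega)^{-1/2}$ fails to annihilate the $\|x_t-x^*\|^2$ term, which is precisely why the constant $1/(1-\omega)$ appears and why this is the main (and only nontrivial) step.

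For part \ref{t:1ii}, since $g\equiv c$ is constant we have $\partial g\equiv\{0\}$, so the element $q_{t+1}\in\partial g(x_{t+1})$ must vanish; hence $q_t\equiv 0$ and the decomposition $\mathcal{G}(x_t,\xi_t)=q_{t+1}+\nabla K(x_t,\xi_t)$ reduces to $\mathcal{G}(x_t,\xi_t)=\nabla K(x_t,\xi_t)$. Taking conditional expectations and invoking the unbiasedness assumption $\E_{\xi_t}[\nabla K(x_t,\xi_t)]=\nabla f(x_t)$ from Problem \ref{prob1} gives $\E_{\xi_t}[\mathcal{G}(x_t,\xi_t)]=\nabla f(x_t)$. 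Substituting both identities into \eqref{main1} produces \eqref{main2} directly. I expect no obstacle here beyond recording these substitutions, since all of the analytic content lives in part \ref{t:1i}.
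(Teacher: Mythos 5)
Your proposal is correct and follows essentially the same route as the paper: expand $\|x_{t+1}-x^*\|^2$ via $x_{t+1}=x_t-\gamma_t\mathcal{G}(x_t,\xi_t)$, take conditional expectations, insert \eqref{e:lin}, and kill the cross term with a weighted Young inequality whose weights $(1-\omega)^{\pm 1/2}$ exactly cancel the $\|x_t-x^*\|^2$ term (the paper reaches the identical bound by Cauchy--Schwarz followed by the same weighted split). Your handling of part \ref{t:1ii}, including the explicit appeal to unbiasedness, matches the paper's argument as well.
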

\begin{proof} \ref{t:1i}:
We have $(\forall t\in\NN)\; x_{t+1} =x_t-\gamma_t\mathcal{G}(x_t,\xi_t)$. Hence, we have
\begin{align*}
\|x_{t+1} - x^*\|^2& = \| x_t - x^* - \gamma_{t} \mathcal{G}(x_t,\xi_t) \|^2\notag\\
&=\| x_t - x^* \|^2 -2\gamma_{t} \scal{x_t - x^*}{\mathcal{G}(x_t,\xi_t)} +\gamma_{t}^2 \|\mathcal{G}(x_t,\xi_t) \|^2.
\end{align*}
Since $x_t$ depends on the history $\xi_{[t-1]}$, and independent of $\xi_t$,
taking conditional  expectation with respect to $\xi_{[t-1]}$, we obtain
\begin{alignat}{2}
\label{e:lin2}
\E_{\xi_t}[\|x_{t+1} - x^*\|^2]
&=\| x_t -x^* \|^2 -2 \gamma_{t} \scal{x_t - x^*}{\E_{\xi_{t}}[\mathcal{G}(x_t,\xi_t)]} +\gamma_{t}^2 \E_{\xi_t}[ \|\mathcal{G}(x_t,\xi_t) \|^2].
\end{alignat}
Now, using \eqref{e:lin}, we derive from \eqref{e:lin2} that
\begin{alignat}{2}
\label{e:lin3}
  \gamma_{t}^{2}\E_{\xi_t}[ \|\mathcal{G}(x_t,\xi_t) \|^2]& \leq (\omega-1) \|x_{t} - x^*\|^2 + 2 \gamma_{t} \scal{x_t - x^*}{\E_{\xi_{t}}[\mathcal{G}(x_t,\xi_t)]} +\gamma_{t}^2\sigma^2
  \end{alignat}
 Note that, by  Cauchy-Schwarz inequality,
 \begin{alignat}{2}
  2 \gamma_{t} \scal{x_t - x^*}{\E_{\xi_{t}}[\mathcal{G}(x_t,\xi_t)]} &\leq 2\gamma_t \|x_t - x^* \| \|\E_{\xi_{t}}[\mathcal{G}(x_t,\xi_t)] \|\notag\\
  &\leq (1-\omega)\|x_t - x^*\|^2 + \frac{\gamma_{t}^2}{1-\omega} \|\E_{\xi_{t}}[\mathcal{G}(x_t,\xi_t)] \|^2,
  \end{alignat}
  which implies that 
  \begin{equation}
    2 \gamma_{t} \scal{x_t - x^*}{\E_{\xi_{t}}[\mathcal{G}(x_t,\xi_t)]} +(\omega-1)\|x_t - x^*\|^2 \leq \frac{\gamma_{t}^2}{1-\omega} \|\E_{\xi_{t}}[\mathcal{G}(x_t,\xi_t)] \|^2.
  \end{equation}
  Therefore, it follows from \eqref{e:lin3} that 
  \begin{equation}
    \gamma_{t}^{2}\E_{\xi_t}[ \|\mathcal{G}(x_t,\xi_t) \|^2] \leq  \frac{\gamma_{t}^2}{1-\omega} \|\E_{\xi_{t}}[\mathcal{G}(x_t,\xi_t)] \|^2 +\gamma_{t}^2\sigma^2,
  \end{equation}
  which proves \eqref{main1}.
  
  \ref{t:1ii}. Since $g$ is a constant function, for all $x$, $\partial g(x) = \{0\} $, hence $q_{t+1}=0$ and  $\mathcal{G}(x_t,\xi_t) = \nabla K(x_t,\xi_t)$.
\end{proof}
\begin{remark} In the remainder of this paper, if $\sigma^2 >0$, \eqref{main2} is called the weak growth condition (WGC) of $f$; and if $\sigma^2=0$, \eqref{main2} is called the growth condition (GC) of $f$.
The growth condition is much weaker than the strong growth condition. We have 
\begin{equation}
(SGC)\Longrightarrow (GC) \Longrightarrow (WGC).
\end{equation}
\end{remark}
\begin{remark} Our necessary condition \eqref{main1} remains valid for non-convex, non-smooth $f$. It also holds in the context of solving monotone inclusions 
\cite{LSB14a} where $\nabla f$ is replaced by any cocoercive operator $B$ and $\partial g$ is replaced by any  maximally monotone operator $A$ (see \cite{livre1} for definitions), and 
$\nabla K(x_t,\xi_t)$ is replaced by any stochastic estimate $r(x_t,\xi_t)$  of $Bx_t$ as in \cite{LSB14a}. More precisely, let us consider the following iteration 
\begin{equation}
\label{e:monoiter}
x_{t+1} = (\Id + \gamma_t A)^{-1}(x_t-\gamma_t r(x_t,\xi_t)),
\end{equation}
aiming at solving the following monotone inclusion
\begin{equation}\label{e:mono}
\text{find $x^*\in\RR^d$ such that }\;  0 \in Ax^* + Bx^*.
\end{equation}
 Suppose that the solution set $\mathcal{S}_1$ of \eqref{e:mono} is non-empty, and \eqref{e:lin} is satisfied for some $x^*\in \mathcal{S}_1$. Then \eqref{main1} holds.
 \end{remark}

 In the next theorem, we show that \eqref{main2} 
is also a sufficient condition for linear convergence (with $\sigma=0$) of the  stochastic gradient method
for the class of restricted strongly convex function $f$.  
Restricted strong convexity is much weaker than strong convexity, some examples and properties of restricted strongly convex functions
can be found in \cite{Zhang14}. Note that if $f$ is a strongly convex function, $A$ is a linear mapping, 
then the composite function $f\circ A$ is restricted strongly convex.

\begin{theorem} Suppose that $g=\iota_C$ for some non-empty closed convex set $C$ in $\RR^d$ such that the set $\mathcal{S}$ of solutions is non-empty,
and  that $f$ is $\mu$-restricted strongly convex on $C$ in the sense that $(\forall x\in C)\; f(x) - f(P_{\mathcal{S}}x) \geq 0.5\mu \|x-P_{\mathcal{S}}x\|^2 $. Suppose that  $f^\star = \inf_{x\in\RR^d}f(x) \in\RR$,
and the following weak growth condition is satisfied: 
\begin{equation}
\label{SG}
\E_{\xi_t}[\|\nabla K(x_t,\xi_t)\|^2] \leq M \|\nabla f(x_t)\|^2 +\sigma^2
\end{equation}
for some positive constant $M$ such that $\mu < 4LM$, and $\sigma\in\RR$. Let us define 
$\gamma_t =
\gamma \leq 1/(2LM)
$, and set $\rho = \gamma\mu(1-\gamma LM) \in ]0,1[$. Then, it holds that 
\begin{equation}
\E_{\xi_t}[\|x_{t+1}-\overline{x}_{t+1} \|^2] \leq (1-\rho)\|x_{t}-\overline{x}_t\|^2 +\gamma^2 \sigma_{1}^2,
\end{equation}
where $\overline{x}_t$ is the projection of $x_t$ onto the set of solutions $\mathcal{S}$ and $\sigma^{2}_1 =\sigma^2 + 2LM(\min_{x\in C}f(x)-f^\star) $.
\end{theorem}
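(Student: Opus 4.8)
The plan is to track the squared distance of the iterate to the solution set $\mathcal{S}$ and to convert the three hypotheses—nonexpansiveness of $P_C$, $L$-smoothness, and restricted strong convexity—into a one-step contraction with an additive noise floor. First I would reduce the target to the distance to a \emph{fixed} solution: since $\overline{x}_{t+1}=P_{\mathcal{S}}x_{t+1}$ is the nearest point of $\mathcal{S}$ to $x_{t+1}$ and $\overline{x}_t=P_{\mathcal{S}}x_t\in\mathcal{S}$, we have $\|x_{t+1}-\overline{x}_{t+1}\|\le\|x_{t+1}-\overline{x}_t\|$, so it suffices to bound $\E_{\xi_t}[\|x_{t+1}-\overline{x}_t\|^2]$. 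Writing $x^*=\overline{x}_t\in\mathcal{S}\subset C$ and using $x^*=P_Cx^*$ together with nonexpansiveness of $P_C$, the step $x_{t+1}=P_C(x_t-\gamma\nabla K(x_t,\xi_t))$ gives $\|x_{t+1}-x^*\|^2\le\|x_t-x^*-\gamma\nabla K(x_t,\xi_t)\|^2$. Expanding and taking $\E_{\xi_t}[\cdot]$ (exploiting that $x_t$ is $\xi_{[t-1]}$-measurable and $\E_{\xi_t}[\nabla K(x_t,\xi_t)]=\nabla f(x_t)$) yields
\[
\E_{\xi_t}[\|x_{t+1}-x^*\|^2]\le\|x_t-x^*\|^2-2\gamma\scal{x_t-x^*}{\nabla f(x_t)}+\gamma^2\E_{\xi_t}[\|\nabla K(x_t,\xi_t)\|^2].
\]

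Next I would insert the three structural estimates. The weak growth condition \eqref{SG} bounds the second-moment term by $M\|\nabla f(x_t)\|^2+\sigma^2$; convexity of $f$ gives $\scal{x_t-x^*}{\nabla f(x_t)}\ge f(x_t)-f(x^*)$; and the descent lemma, minimized over $\RR^d$, gives $\|\nabla f(x_t)\|^2\le 2L(f(x_t)-f^\star)$, where $f^\star$ is the \emph{unconstrained} infimum. Combining these produces
\[
\E_{\xi_t}[\|x_{t+1}-x^*\|^2]\le\|x_t-x^*\|^2-2\gamma\big(f(x_t)-f(x^*)\big)+2\gamma^2LM\big(f(x_t)-f^\star\big)+\gamma^2\sigma^2.
\]

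The one genuinely delicate point lives here: the dissipative term measures the gap to the \emph{constrained} minimum $f(x^*)=\min_{x\in C}f(x)$, while the smoothness bound produces a gap to the \emph{unconstrained} infimum $f^\star$. I would reconcile them by splitting $f(x_t)-f^\star=\big(f(x_t)-f(x^*)\big)+\big(f(x^*)-f^\star\big)$, the last summand being the nonnegative constant $\min_{x\in C}f-f^\star$ that ultimately feeds the noise floor. Collecting the $f(x_t)-f(x^*)$ contributions yields coefficient $-2\gamma(1-\gamma LM)$, which is nonpositive precisely because $\gamma\le 1/(2LM)$ forces $\gamma LM\le 1/2$; this is exactly the role of the step-size restriction, and $\mu<4LM$ is what keeps the resulting $\rho$ inside $]0,1[$. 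Applying restricted strong convexity $f(x_t)-f(x^*)\ge 0.5\mu\|x_t-x^*\|^2$ (valid since every iterate lies in $C$) then converts the negative function-gap term into $-\gamma\mu(1-\gamma LM)\|x_t-x^*\|^2=-\rho\|x_t-x^*\|^2$.

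Finally, the leftover constant is exactly $\gamma^2\big(\sigma^2+2LM(\min_{x\in C}f-f^\star)\big)=\gamma^2\sigma_1^2$, so that $\E_{\xi_t}[\|x_{t+1}-x^*\|^2]\le(1-\rho)\|x_t-x^*\|^2+\gamma^2\sigma_1^2$; undoing the reduction $\|x_{t+1}-\overline{x}_{t+1}\|\le\|x_{t+1}-\overline{x}_t\|$ and recalling $x^*=\overline{x}_t$ delivers the claim. I expect the bookkeeping to be routine once the constrained-versus-unconstrained optimum decomposition is set up correctly; that decomposition, and verifying that the sign of the function-gap coefficient is controlled by $\gamma\le 1/(2LM)$, is the main conceptual obstacle.
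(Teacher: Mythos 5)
Your proposal is correct and follows essentially the same route as the paper's own proof: reduce to the distance from $\overline{x}_t$ via $\|x_{t+1}-\overline{x}_{t+1}\|\le\|x_{t+1}-\overline{x}_t\|$, use nonexpansiveness of $P_C$, take conditional expectation and apply the weak growth condition, then combine the descent-lemma bound $\|\nabla f(x_t)\|^2\le 2L(f(x_t)-f^\star)$ split as $(f(x_t)-f(\overline{x}_t))+(f(\overline{x}_t)-f^\star)$ with convexity and restricted strong convexity. The constrained-versus-unconstrained gap decomposition you flag as the delicate point is exactly the paper's step \eqref{ss2}, and your sign analysis of the coefficient $-2\gamma(1-\gamma LM)$ matches the role of $\gamma\le 1/(2LM)$ in the paper.
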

\begin{proof} Since $\mathcal{S} \subset C$ and $\overline{x}_t \in C$, we have
\begin{alignat}{2}
\|x_{t+1} -\overline{x}_{t+1}\|^{2} &\leq \|x_{t+1} -\overline{x}_{t}\|^{2}\notag\\
&= \|P_{C}(x_t-\gamma \nabla K(x_t,\xi_t)) - P_C\overline{x}_t\|^2\notag\\
&\leq \|x_{t}-\overline{x}_t -\gamma \nabla K(x_t,\xi_t) \|^2,
\end{alignat}
where the last inequality follows from the non-expansiveness of $P_C$. Hence,  we obtain,
\begin{alignat}{2}
\|x_{t+1} -\overline{x}_{t+1}\|^{2}&\leq \|x_t-\overline{x}_t\|^2 -2\gamma\scal{x_t-\overline{x}_t}{\nabla K(x_t,\xi_t)} +\gamma^2\|\nabla K(x_t,\xi_t)\|^2.
\end{alignat}
Since $x_t$ depends on the history $\xi_{[t-1]}$, and independent of $\xi_t$,
taking conditional  expectation with respect to $\xi_{[t-1]}$, and using the condition \eqref{SG}, we obtain
\begin{alignat}{2}
\E_{\xi_t}[\|x_{t+1} - \overline{x}_{t+1}\|^2]
&\leq\| x_t -\overline{x}_t \|^2 -2 \gamma \scal{x_t - \overline{x}_t}{\nabla f(x_t)} +\gamma^2 M \|\nabla f(x_t) \|^2+\gamma^2\sigma^2.\label{ss1}
\end{alignat}
Using the $L$-Lipschitz continuous of $\nabla f$, it follows that 
\begin{equation}
\|\nabla f(x_t)\|^2\leq 2L (f(x_t)-f^\star) = 2L (f(x_t)-f(\overline{x}_t)) + 2L(f(\overline{x}_t) -f^\star) \label{ss2}.
\end{equation}
Moreover, using the convexity of $f$, we also have 
\begin{equation}
\scal{\overline{x}_t-x_t}{\nabla f(x_t)} \leq f(\overline{x}_t) - f(x_t) \label{ss3}.
\end{equation}
Inserting \eqref{ss2} and \eqref{ss3} into \eqref{ss1}, we get
\begin{alignat}{2}
\E_{\xi_t}[\|x_{t+1} - \overline{x}_{t+1}\|^2]
&\leq\| x_t -\overline{x}_t \|^2 + 2 \gamma( f(\overline{x}_t) - f(x_t))  +\gamma^2 2LM  (f(x_t)-f(\overline{x}_t)) +\gamma^2\sigma_{1}^2\notag\\
&= \| x_t -\overline{x}_t \|^2 - 2\gamma(1-\gamma LM)(f(x_t) -f(\overline{x}_t)) +\gamma^2\sigma_{1}^2\notag\\
&\leq \| x_t -\overline{x}_t \|^2  -\gamma\mu(1-\gamma LM)\| x_t -\overline{x}_t \|^2+\gamma^2\sigma_{1}^2 \notag\\
&= (1-\rho) \| x_t -\overline{x}_t \|^2 +\gamma^2\sigma_{1}^2,
\end{alignat}
where the last inequality follows from the $\mu$-restricted strongly convex of $f$, which proves the desired result.
\end{proof}
\begin{remark} If $f$ is restricted strongly convex, we can find $\mu \leq 4LM$. Hence, 
when $C=\RR^d$, $\sigma=0$ and $\mu \leq 4LM$, the optimal choice of $\gamma$ is $1/(2LM)$. 
\end{remark}
\begin{example}\label{ex:1}
 Suppose that $K(\cdot,\xi)$ is a differentiable function with $L_{\xi}$-Lipschitz gradient such that $L_{0}=\sup_{\xi\in\Omega}L_{\xi} <+\infty$.  
 If $f$ is $\mu$-restricted strongly convex and $(\forall t\in\NN)\;\E_{\xi_t}[\|\nabla K(\overline{x}_t,\xi_t) \|^2] \leq \beta^2  <+\infty$ almost surely, for some positive constant $\beta$,
 then
\begin{equation}
(\forall t\in \NN)\; \E_{\xi_t}[\|\nabla K(x_t,\xi_t)\|^2] \leq (4L_0/\mu)\|\nabla f(x_t) \|^2 +2\E_{\xi_t}[\|\nabla K(\overline{x}_t,\xi_t) \|^2],
\end{equation}
where $\overline{x}_t$ is the projection of $x_t$ onto the set of minimizers $\mathcal{S}$. Hence, the condition \eqref{SG}
is satisfied with $M= 4L_0/\mu$ and $\sigma^2 = 2\beta^2$.
\end{example}
\begin{proof} Indeed, using the cococercivity of $\nabla K(\cdot,\xi)$, we have
\begin{alignat}{2}
\E_{\xi_t}[\|\nabla K(x_t,\xi_t)\|^2]  &\leq 2 \E_{\xi_t}[\|\nabla K(x_t,\xi_t) - \nabla K(\overline{x}_t,\xi_t)\|^2]+ 2\E_{\xi_t}[\|\nabla K(\overline{x}_t,\xi_t) \|^2]\notag\\
 &\leq 2L_0\scal{x_t-\overline{x}_t}{\nabla f(x_t) -\nabla f(\overline{x}_t)}+ 2\E_{\xi_t}[\|\nabla K(\overline{x}_t,\xi_t) \|^2]\notag\\
  &\leq 2L_0\scal{x_t-\overline{x}_t}{\nabla f(x_t)}+ 2\E_{\xi_t}[\|\nabla K(\overline{x}_t,\xi_t) \|^2]\label{asa}.
\end{alignat}
Suppose that $f$ is $\mu$-restricted strongly convex. We have 
$f(x_t) -f(\overline{x}_t) \geq 0.5 \mu \|x_t-\overline{x}_t\|^2$ and $f(\overline{x}_t) -f(x_t) \geq \scal{x_t-\overline{x}_t}{-\nabla f(x_t)}$. Adding them, we get 
$\scal{x_t-\overline{x}_t}{\nabla f(x_t)}\geq 0.5 \mu \|x_t-\overline{x}_t\|^2$. Therefore, $\|x_t-\overline{x}_t\| \leq (2/\mu)\|\nabla f(x_t) \|$. We have
\begin{alignat}{2}
2L_0\scal{x_t-\overline{x}_t}{\nabla f(x_t)}\leq 2L_0 \|x_t-\overline{x}_t\| \|\nabla f(x_t)\|\leq 4L_0\mu^{-1}\|\nabla f(x_t)\|^2.
\end{alignat}
Inserting this into \eqref{asa}, we get the result.
\end{proof}
\begin{example}
Since $\E_{\xi_t}[\scal{\nabla K(x_t,\xi_t) -\nabla f(x_t)}{\nabla f(x_t)}] =0$, we have
\begin{equation}
\E_{\xi_t}[\|\nabla K(x_t,\xi_t)\|^2] = \| \nabla f(x_t)\|^2 + \E_{\xi_t}[\|\nabla K(x_t,\xi_t) -\nabla f(x_t)\|^2].
\end{equation}
Therefore, under the standard condition $\E_{\xi_t}[\|\nabla K(x_t,\xi_t) -\nabla f(x_t)\|^2] \leq \sigma^2$, the condition \eqref{SG} is satisfied.
\end{example}

In the case when $(\forall t\in\NN)\;\partial g(x_t) = \{Q\}$, then $q_{t+1} = -\nabla f(x^*)$. In this case, the necessary condition, with $\sigma=0$, becomes 
$\E_{\xi_t}[\|\nabla K(x_t,\xi_t)-\nabla f(x^*)\|^2] \leq M \|\nabla f(x_t)-\nabla f(x^*)\|^2$. Whenever, this condition is satisfied and $f$ is strongly convex, 
we can prove that the linear convergence of the proximal stochastic gradient method is obtained. However, 
the following result shows that \eqref{SG} is also a sufficient for linear convergence to a noise dominated region of the proximal stochastic gradient method.

\begin{proposition} \label{p:1}
Suppose that $f$ is $\mu$-strongly convex, and the weak growth condition \eqref{SG} is satisfied. Set
$\sigma_{1}^2=2(1+2M) \|\nabla f(x^*)\|^2 +  2\sigma^2$, 
where $x^*$ is the optimal solution. Let  $\gamma_t =\gamma$ be chosen such that $\rho = \gamma\mu(1-2\gamma LM) \in ]0,1[$. 
Then, for iteration \eqref{algo1}, we have 
\begin{equation}\label{111}
\E_{\xi_t}[\|x_{t+1}-x^* \|^2] \leq (1-\rho)\|x_{t}-x^*\|^2 +\gamma^2\sigma_{1}^2.
\end{equation}
\end{proposition}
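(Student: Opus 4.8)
The plan is to combine the nonexpansiveness of the proximity operator with the fixed-point characterization of the optimum. Since $x^*$ solves $\min_x f+g$, optimality reads $-\nabla f(x^*)\in\partial g(x^*)$, which is equivalent to $x^*=\prox_{\gamma g}(x^*-\gamma\nabla f(x^*))$. Comparing this identity with the update \eqref{algo1}, namely $x_{t+1}=\prox_{\gamma g}(x_t-\gamma\nabla K(x_t,\xi_t))$, and using that $\prox_{\gamma g}$ is nonexpansive, I would first obtain
\[
\|x_{t+1}-x^*\|^2 \leq \|(x_t-x^*)-\gamma(\nabla K(x_t,\xi_t)-\nabla f(x^*))\|^2 .
\]
Expanding the right-hand side, taking $\E_{\xi_t}[\cdot]$, and using the unbiasedness $\E_{\xi_t}[\nabla K(x_t,\xi_t)]=\nabla f(x_t)$ exactly as in the derivation of \eqref{e:lin2}, this yields
\[
\E_{\xi_t}[\|x_{t+1}-x^*\|^2] \leq \|x_t-x^*\|^2 -2\gamma\scal{x_t-x^*}{\nabla f(x_t)-\nabla f(x^*)} +\gamma^2\E_{\xi_t}[\|\nabla K(x_t,\xi_t)-\nabla f(x^*)\|^2].
\]

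The crux is to control the second-moment term and trade it against the inner product. I would bound it by $\|a-b\|^2\leq 2\|a\|^2+2\|b\|^2$ and the weak growth condition \eqref{SG}, then split $\|\nabla f(x_t)\|^2$ around $\nabla f(x^*)$ once more; the leftover constant terms assemble exactly into $\sigma_1^2$, giving
\[
\E_{\xi_t}[\|\nabla K(x_t,\xi_t)-\nabla f(x^*)\|^2] \leq 4M\|\nabla f(x_t)-\nabla f(x^*)\|^2 + \sigma_1^2 .
\]
The $L$-Lipschitz continuity of $\nabla f$ (equivalently, its $1/L$-cocoercivity) then gives $\|\nabla f(x_t)-\nabla f(x^*)\|^2\leq L\scal{x_t-x^*}{\nabla f(x_t)-\nabla f(x^*)}$, which lets me re-express everything through the single quantity $S:=\scal{x_t-x^*}{\nabla f(x_t)-\nabla f(x^*)}$.

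Collecting the two estimates, the bound collapses to
\[
\E_{\xi_t}[\|x_{t+1}-x^*\|^2] \leq \|x_t-x^*\|^2 -2\gamma(1-2\gamma LM)\,S +\gamma^2\sigma_1^2 .
\]
The standing hypothesis $\rho=\gamma\mu(1-2\gamma LM)\in\zeroun$ forces $1-2\gamma LM>0$, so the coefficient of $S$ is negative and I may invoke strong convexity in the correct direction, $S\geq\mu\|x_t-x^*\|^2$. This produces $\E_{\xi_t}[\|x_{t+1}-x^*\|^2]\leq(1-2\rho)\|x_t-x^*\|^2+\gamma^2\sigma_1^2$, which in particular implies the stated bound \eqref{111} with factor $(1-\rho)$ since $\rho\geq 0$.

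The step I expect to be the main obstacle is the bookkeeping in the middle paragraph: one must distribute the crude norm splits so that the constant remainder assembles \emph{exactly} into $\sigma_1^2=2(1+2M)\|\nabla f(x^*)\|^2+2\sigma^2$ while the variable part remains expressible through $S$, and one must verify that the sign of the coefficient of $S$ is governed solely by $\gamma<1/(2LM)$ before strong convexity is applied. The rest is the routine expansion, conditional-expectation, and cocoercivity machinery already used for \eqref{e:lin2} and in Example~\ref{ex:1}.
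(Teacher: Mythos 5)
Your proposal is correct and follows essentially the same route as the paper's own proof: the fixed-point identity $x^*=\prox_{\gamma g}(x^*-\gamma\nabla f(x^*))$ plus nonexpansiveness, the same two norm splits assembling $4M\|\nabla f(x_t)-\nabla f(x^*)\|^2+\sigma_1^2$, then cocoercivity and strong convexity applied to $\scal{x_t-x^*}{\nabla f(x_t)-\nabla f(x^*)}$. In fact your bookkeeping is slightly more careful than the paper's: you obtain the sharper contraction factor $(1-2\rho)$ and explicitly note that it implies the stated $(1-\rho)$ bound, whereas the paper writes $(1-\rho)$ directly without comment.
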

\begin{proof} Since \eqref{SG} is satisfied. Then
\begin{alignat}{2}
\E_{\xi_t}[\|\nabla K(x_t,\xi_t)-\nabla f(x^*)\|^2] &\leq 2M\|\nabla f(x_t)\|^2 + 2\|\nabla f(x^*)\|^2+2\sigma^2\notag\\
&\leq 4M\|\nabla f(x_t) -\nabla f(x^*)\|^2 + 2(1+2M) \|\nabla f(x^*)\|^2 +  2\sigma^2.\label{SG1}
\end{alignat}
Since $\prox_{\gamma g}$ is non-expansive and $x^* =\prox_{\gamma g}(x^*-\gamma \nabla f(x^*))$,
 we have 
\begin{alignat}{2}
\|x_{t+1} -x^*\|^2&\leq \|x_t-x^*-\gamma(\nabla K(x_t,\xi_t) -\nabla f(x^*)) \|^2\notag\\
&= \|x_t-x^*\|^2 -2\gamma \scal{x_t-x^*}{\nabla K(x_t,\xi_t) -\nabla f(x^*)} +\gamma^2\|\nabla K(x_t,\xi_t) -\nabla f(x^*) \|^2.\notag
\end{alignat}
Taking conditional expectation both sides and using  \eqref{SG1}, we get 
\begin{alignat}{2}
\E_{\xi_t}[\|x_{t+1} -x^*\|^2]& \leq \|x_t-x^*\|^2 -2\gamma \scal{x_t-x^*}{\nabla f(x_t) -\nabla f(x^*)} \notag\\
&\quad +\gamma^24M\|\nabla f(x_t) -\nabla f(x^*) \|^2+\gamma^2\sigma_{1}^2\notag\\
&\leq \|x_t-x^*\|^2 -(2\gamma -\gamma^2 4LM)\scal{x_t-x^*}{\nabla f(x_t) -\nabla f(x^*)} +\gamma^2\sigma_{1}^2\notag\\
&\leq (1-\rho)\|x_t-x^*\|^2 +\gamma^2\sigma_{1}^2,
\end{alignat}
where the first inequality follows from the cocoercivity of $\nabla f$, and the last equality follows from the strong convexity of $f$. 
\end{proof}
\begin{remark} When $\sigma_1 > 0$, \eqref{111} implies that we get linear converge to
a noise dominated region proportional to $\gamma\sigma_1$. In the case, $g=0$ and $g=\iota_C$, this kind of convergence result can be found in \cite{Need}
and \cite{Nedic}, respectively. For the case of the stochastic proximal point algorithm, it is presented in \cite{Ryu16}.
\end{remark}
\begin{remark} The proposition above remains  valid for \eqref{e:monoiter}. Here $\nabla f$ and $\partial g$ are replaced by a cocoercive, strongly monotone operator $B$ and a maximally 
monotone operator $A$, respectively; and $\nabla K(x_t,\xi_t)$ is replaced by unbiased estimate  $r(x_t,\xi_t)$ of $Bx_t$ as in \cite{LSB14a}.
\end{remark}
\begin{remark} Under the same conditions as in Proposition \ref{p:1},  we see that in the case when $\gamma_t$ is not constant, $\gamma_t=\mathcal{O}(1/(1+t))$, then there exists $t_0\in\NN$
such that 
\begin{equation}
(\forall t\geq t_0)\quad\E[\|x_t-x^*\|^2] = \mathcal{O}(1/t),
\end{equation} 
where the expectation is taken over the whole history. This convergence rate is known in \cite{LSB14a}.
\end{remark}
\section{Special instances of the necessary condition}
We have already proved that the growth condition 
\begin{equation}
\label{SG0}
\E_{\xi_t}[\|\nabla K(x_t,\xi_t)\|^2] \leq M \|\nabla f(x_t)\|^2,
\end{equation}
is the necessary and sufficient condition for linear convergence of the stochastic gradient method for the class of
convex differentiable function with gradient Lipschitz and restricted strongly convex. 
We study this necessary condition to establish the linear convergence of randomized  Kaczmarz algorithm \cite{Stroh} and 
of the stochastic gradient method as in \cite{Mark13}.

Let $(a_i)_{1\leq i\leq m}$ be sequence of colum vectors, with norm $1$,  in $\RR^d$ and $b\in \RR^m$  with $(m\geq d)$. 
Set $(\forall i \in \{1,\ldots,m\})\; C_i = \menge{x\in\RR^d}{\scal{a_i}{x} =b_i}$. Let $A$ be a matrix with rows $(a^{T}_i)_{1\leq i\leq m}$.
Let us consider the problem
\begin{equation}
\underset{x\in \RR^d}{\text{minimize}} \; f(x)= \frac{1}{2m}\sum_{i=1}^m\|x-P_{C_i}x\|^2,
\end{equation}
under the assumptions that $\emp\not=\cap_{i=1}^mC_i$ and $A$ is a full rank matrix. 
Set $f_i = 0.5\|x-P_{C_i}x\|^2$. Let $i_k$ be chosen uniformly at random in $\{1,\ldots,m\}$. 
Then 
\begin{alignat}{2}
\E_{i_k}[\|\nabla f_{i_k}(x)\|^2] &=\frac{1}{m}\sum_{i=1}^m\|x-P_{C_i}x\|^2\notag\\
&= \frac{1}{m}\sum_{i=1}^m|\scal{a_i}{x}-b_i |^2\notag\\
&= \frac{1}{m}\|Ax-b\|^2.
\end{alignat}
Let us define $A^{\dag} = (A^TA)^{-1}A^T$. Then $\|A^{\dag}(Ax-b)\| \leq \|(A^TA)^{-1}\|\|A^T(Ax-b)\|$. Let $x^*\in \cap_{i=1}^mC_i$. Then
$\|A^{\dag}(Ax-b)\| = \|x- A^{\dag}b\| = \|x-x^*\| \geq \|A\|^{-1}\|A(x-x^*)\|=\|A\|^{-1}\|Ax-b\|$. Therefore, upon setting $M=m\|A\|^2\|(A^TA)^{-1}\|^2 $, we have
\begin{alignat}{2}
\E_{i_k}[\|\nabla f_{i_k}(x)\|^2] &= \frac{1}{m}\|Ax-b\|^2 \leq \frac{\|A\|^2\|(A^TA)^{-1}\|^2 }{m} \|A^T(Ax-b)\|^2
= M \|\nabla f(x)\|^2,
\end{alignat}
which shows that the necessary condition \eqref{main2} is satisfied with $\sigma=0$. Furthermore, since the objective function is restricted strongly convex, 
in view of above theorem, the stochastic gradient method converges linearly which was also known in \cite{Stroh} with $\gamma=1$.
Further connections to the randomized Kaczmarz algorithm can be found in  \cite{Need} where the case $\cap_{i=1}^mC_i=\emp$ is investigated. In this work, 
they show that the stochastic gradient method converges linearly to
a noise dominated region proportional to $\gamma\sigma$ with
 $\sigma = 2\E_{i_k}[\|\nabla f_{i_k}(x^*)\|^2]$.

In the general case of $f_i$.  The condition \eqref{SG0} is satisfied when 
\begin{equation}
\label{SG2}
(\forall i\in\{1,\ldots, n\}) (\forall x\in C)\quad \|\nabla f_i(x)\|^2 \leq M \|\nabla f(x)\|^2.
\end{equation}

\section{Conclusions}
The strong growth condition is used in \cite{solodov98} where the incremental gradient method converges with a sufficiently small constant step size and 
in \cite{Tseng98} where incremental gradient method converges linearly with  a sufficiently small constant step size. Furthermore,
 and it is also recently used in \cite{Gur15} for linear convergence of the incremental Newton method,
and in \cite{Mark13} for linear convergence of the stochastic gradient method. All the existing work agrees that the strong growth condition 
is very strong, it requires at least the vanishing of stochastic gradient  at optimal solution. Unfortunately, our work shows that 
it is necessary to achieve linear convergence.

\noindent {\bf Acknowledgments.} The authors would like to thank Yen-Huan-Li, Ahmet Alacaoglu (they are Ph.d students of LIONS-EPFL) for  useful discussions.
The work of B. Cong Vu and V. Cevher was supported by
European Research Council (ERC) under the European Union's Horizon 2020 research and innovation
programme (grant agreement no 725594 - time-data).

\end{document}